\documentclass[preprint,12pt]{elsarticle}

\usepackage{amssymb}
\usepackage{amsmath}
\usepackage{amsthm}

\newcommand{\OO}{{\mathcal O}}
\newcommand{\MM}{{\mathcal M}}
\def\ad{\mathop{\rm ad}\nolimits}
\newcommand{\R}{\mathbf{R}}
\def\End{\mathop{\rm End}\nolimits}
\def\tr{\mathop{\rm tr}\nolimits}
\newcommand{\C}{\mathbf{C}}
\def\Hom{\mathop{\rm Hom}\nolimits}
\def\Real{\mathop{\rm Re}\nolimits}
\newcommand{\lie}[1]{\mathfrak{#1}}
\newtheorem{theorem}{Theorem}[section]
\newtheorem{proposition}[theorem]{Proposition}
\newtheorem{lemma}[theorem]{Lemma}

\newcommand{\PP}{{\rm P}}
\theoremstyle{remark}
\newtheorem{remark}{Remark}

\begin{document}

\begin{frontmatter}


\author{Nigel Hitchin}
 \ead{nigel.hitchin@maths.ox.ac.uk}
 \ead[url]{https://people.maths.ox.ac.uk/hitchin/}
 \affiliation{organization={University of Oxford},
           addressline={Mathematical Institute, Woodstock Road},
        city={Oxford},
            postcode={OX2 6GG},
           country={UK}}

\title{A universal Higgs bundle moduli space}

\begin{abstract}
The article  gives a differential geometric construction of the complex structure on the  total space of the family of moduli spaces of Higgs bundles on a curve $C$, as a fibration over Teichm\"uller space. The method uses a real valued function $f$ defined on the character variety, essentially the energy of a harmonic map, which is dependent on the complex structure of $C$. Using $f$ we define a natural Ehresmann connection with structure group the Hamiltonian diffeomorphisms of the fibre and using the associated horizontal distribution define an almost complex structure which we show to be integrable. The formalism parallels the usual variation of Hodge structure in this nonabelian version.

\end{abstract}



\begin{keyword}

Higgs bundle\sep connection \sep symplectic \sep hyperk\"ahler 



\end{keyword}

\end{frontmatter}



\section{Introduction}
\label{sec1}
The nonabelian Hodge correspondence establishes a  diffeomorphism between two  spaces associated to a compact Riemann surface $C$. One is the character variety, the space of reductive representations of the fundamental group $\pi_1(C)$ into a complex Lie group $G$, modulo conjugation. 
The other is the moduli space of stable Higgs bundles, pairs consisting of a holomorphic principal $G$-bundle $P$ and a holomorphic section $\Phi$ of $\ad (P)\otimes K$. The first,  denoted  usually  by $\MM_B$, the Betti moduli space,  is independent of the complex structure on $C$, but the complex structure of the second, the Dolbeault space $\MM_{Dol}$, varies. The purpose of this article is to give a description of this dependence by using a natural symplectic connection. 

The diffeomorphism between the two spaces preserves a real symplectic form. It is the real part of the Goldman form for $\MM_B$ and is the K\"ahler form  $\omega_1$ for the hyperk\"ahler metric on $\MM_{Dol}$. Our symplectic connection  may be interpreted as a connection over Teichm\"uller space ${\mathcal T}$ with structure group the Hamiltonian diffeomorphisms of $\MM_B$. It is defined by taking the trivial flat connection $\MM_B\times {\mathcal T}$ and considering the isometric action of the circle $\Phi\mapsto e^{i\theta}\Phi$ on  $\MM_{Dol}\cong \MM_B$. This provides a one-parameter family of flat connections and, averaging over the compact group $S^1$, the symplectic connection $A$. The family of flat connections then has the form 
$$\nabla_\theta=\nabla_A-\frac{i}{4}(e^{2i\theta}\varphi-e^{-2i\theta}\bar\varphi).$$
Considering its action on the adjoint bundle of Hamiltonian functions on $\MM_B$ endowed with the Poisson bracket, the vanishing of curvature gives the equations
$$
d_A\varphi=0, \qquad \{\varphi,\varphi\}=0,\qquad F_A+\frac{1}{8}\{\varphi,\bar\varphi\}=0.
$$
Here the curvature $F_A$ is to be regarded as a 2-form on Teichm\"uller space with values in functions on the fibres and $\varphi$ a 1-form with values in the complexification of this bundle. 

The form of these equations suggests a strong analogy with Higgs bundles, and if we observe in addition that the circle acts as $e^{2i\theta}$ on $\varphi$, it recalls the type of Higgs bundle known as a ``complex variation of Hodge structure". In fact, if one argues as Simpson did in  \cite{Simp}, that the differential geometric structure on $\MM_B$ is a nonlinear version of Hodge theory, then what we have here can indeed be considered as such a variation. 

The main application of this approach is to use the connection to define from a differential-geometric viewpoint the  complex structure on the total space $\MM_B\times {\mathcal T}\rightarrow {\mathcal T}$ which restricts on each fibre to  $\MM_{Dol}$ for the corresponding complex structure on $C$. From this point of view the product structure  $\MM_B\times {\mathcal T}$ is the isomonodromic foliation considered for example in the recent papers \cite{Col},\cite{Al}. The principal tool here is the function $f:{\MM_B}\rightarrow \R$ which, as observed in the author's original paper \cite{Hit0}, is at the same time the moment  map for the circle action on $\MM_{Dol}$, a K\"ahler potential for the complex structure on $\MM_B$, and essentially the energy of the harmonic map which lies at the basis of the nonabelian Hodge correspondence.

A number of observations follow from our approach including the behaviour of the energy for a fixed representation of $\pi_1(C)$ as the complex structure varies and the case of the character variety for a real form of the complex group $G$. We also describe hyperholomorphic line bundles on $\MM$ arising from deformations 
of the hyperk\"ahler metric.

The author wishes to thank ICMAT for its support:  a  lecture there by Richard Wentworth in June 2025 based on the work in progress for \cite{Col} was the stimulus for this paper and there are common results. That paper deals with the actual gauge-theoretical equations; here we use the geometry of a hyperk\"ahler  metric with a circle action. Another approach can be found in \cite{Al}.
\section{The function $f$}
\subsection{Basic properties}
Let $C$ be a compact Riemann surface of genus $g\ge 2$. A solution of the Higgs bundle equations for the general linear group (we focus on this case) consists of a holomorphic vector bundle $E$, a holomorphic section $\Phi$ of $\End E\otimes K$ and a Hermitian metric on $E$ defining a connection $A$ such that $F_A+[\Phi,\Phi^*]=0$. Equivalently 
$
\nabla_A+e^{i\theta}\Phi+e^{-i\theta}\Phi^*$
is a flat connection for all $\theta$. Following \cite{Hit0} we define 
$$f=-\frac{1}{2}\Vert \Phi \Vert^2=-i\int_C\tr \Phi\wedge\Phi^*.$$
The function $f$ is proper on the moduli space $\MM_{Dol}$ of solutions to these equations. At smooth points there is a natural hyperk\"ahler metric with complex structures $I,J,K$ satisfying the  quaternionic relations  $I^2=J^2=K^2=IJK=-1$ and corresponding K\"ahler forms $\omega_1,\omega_2,\omega_3$. We shall use $I$ for the holomorphic structure on $\MM_{Dol}$.  The action of the circle $\Phi\mapsto e^{i\theta}\Phi$ preserves $I$ and $\omega_1$ and $f$ is a corresponding moment map :  $i_X\omega_1=df$. 

The function $f$ has a different relationship with complex structure $J$ \cite{Hit0}. For a tangent vector $U$, $Jdf(U)=df(JU)=\omega_1(X,JU)=\omega_3(X,U)$ so that $Jdf=i_X\omega_3$. Then $dJdf=d(i_X\omega_3)={\mathcal L}_X\omega_3$. But the circle acts as $e^{i\theta}$ on $\omega_2+i\omega_3$ so we have 
\begin{equation}\label{dJdf}
\omega_2=-dJdf.
\end{equation}
By the nonabelian Hodge correspondence the complex structure $J$ defines the holomorphic structure on the moduli space of flat irreducible $GL(n,\C)$-connections  $\MM_B$ with representative $\nabla_A+\Phi+\Phi^*$.  Up to equivalence a flat connection is determined by its holonomy and so $\MM_B$ as a complex manifold can be identified with an open set in  $\Hom(\pi_1(C),GL(n,\C))/GL(n,\C)$ where the action is conjugation, the character variety. This depends only on the fundamental group and not the complex structure on $C$. Its natural Goldman-Atiyah-Bott holomorphic symplectic form has real and imaginary parts $\omega_1$ and $\omega_3$. So $J, \omega_3,\omega_1$ may be regarded as fixed. Then from (\ref{dJdf}) the function $f$ determines the hyperk\"ahler metric, since it defines $\omega_2$. The complex structures  $I$ and $K$ are obtained from $\omega_3^{-1}\omega_2$ and $\omega_1^{-1}\omega_2$, and so the quaternionic properties require $f$ to satisfy a nonlinear algebraic relationship among its second derivatives.  

\subsection{Variation of $f$}
The function $f$ depends on the complex structure $I$ which itself depends on the complex structure on $C$ and on the cotangent bundle this is just the Hodge star operator $\ast\!:\!T^*\rightarrow T^*$, with $\ast^2=-1$. A first order deformation is then an endomorphism $\dot \ast$ with $\dot \ast \ast+\ast \dot\ast=0$. We write the flat complex connection as $\nabla_A+\phi$ where $\phi$ is self-adjoint and 
$$f=-\frac{1}{2}\int_{C}\tr \phi\wedge \ast \phi.$$
We want to vary $f$ while keeping the point in the character variety fixed, so a first order deformation of the flat connection must be given by an infinitesimal  complex gauge transformation:
$$\nabla_A(\psi_1+\psi_2)+[\phi,\psi_1+\psi_2]$$
 where $\psi_1$ is skew-Hermitian and $\psi_2$ Hermitian. 
Since $\phi$ is Hermitian and the connection $A$ is unitary   we have a deformation $(\dot A,\dot \phi)$ where
$\dot A=\nabla_A \psi_1+[\phi,\psi_2]$ and 
\begin{equation}\label{dotphi}
 \dot\phi=\nabla_A \psi_2+[\phi,\psi_1].
 \end{equation}
Then 
$$\dot f=-\frac{1}{2}\int_{C}\tr \dot \phi\wedge \ast \phi+\tr \phi\wedge \dot\ast \phi+\tr  \phi\wedge \ast \dot \phi$$
Consider the first term and use (\ref{dotphi}). We have 
$$-\frac{1}{2}\int_{C}\tr \dot \phi\wedge \ast \phi=-\frac{1}{2}\int_{C}\tr ([\phi,\psi_1]\wedge \ast \phi)$$
using the relation $d_A\ast\phi=0$ from the Higgs bundle equations and integrating by parts. But $\alpha\wedge \ast \beta$ is symmetric and $\tr([a,b],c)$ skew-symmetric so this contributes $0$.
Similarly using $d_A\phi=0$  the third term gives zero and there remains
$$\dot f=-\frac{1}{2}\int_{C}\tr \phi\wedge \dot\ast \phi.$$
The variation $\dot\ast$ anticommutes with $\ast$ and so maps $(1,0)$ forms to $(0,1)$ forms. As such it is a section $\mu$ of $K^*\otimes \bar K$ (representing the Kodaira-Spencer class of the deformation of $C$) and 
\begin{equation}\label{dotf}
\dot f=-\frac{1}{2}{\Real}\int_{C}\tr \Phi^2\mu.
\end{equation}
Note that $\tr \Phi^2$ is the evaluation of an invariant polynomial on $\lie{gl}(n,\C)$ on the Higgs field $\Phi$ and so $\dot f$ is the real part of one of the Poisson-commuting functions of the integrable system on the moduli space $\MM_{Dol}$.

\subsection{Variation of the hyperk\"ahler metric} \label{HKvary}
Since we are fixing the character variety we regard the symplectic forms $\omega_1$ and $\omega_3$ as fixed so the first variation is simply $\dot \omega_2$.

\begin{proposition}
$\dot\omega_2$ is of  type $(1,1)$ with respect to all complex structures.
\end{proposition}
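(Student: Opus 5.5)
We vary the hyperk\"ahler structure along a one-parameter family of complex structures on $C$, keeping $\omega_1$, $\omega_3$ and $J$ fixed. The plan is to show that $\dot\omega_2$ is of type $(1,1)$ for each of $I$, $J$, $K$ separately, and then to show this forces type $(1,1)$ for every complex structure $aI+bJ+cK$ in the two-sphere.

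\emph{Type $(1,1)$ for $J$.} Differentiating (\ref{dJdf}) gives $\dot\omega_2=-dJd\dot f$, with $\dot f$ given by (\ref{dotf}). Since $J$ is fixed, $dJd=2i\partial_J\bar\partial_J$ applied to any real function is a real $(1,1)$-form for $J$. So $\dot\omega_2$ is $J$-$(1,1)$.

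\emph{Type $(1,1)$ for $I$ and $K$ via the quaternionic constraint.} We differentiate the algebraic identities that hold throughout the family. Write $\omega_i(U,V)=g(I_iU,V)$ and regard each form as a map $T\to T^*$. Then $K=\omega_1^{-1}\omega_2$ and $I=\omega_3^{-1}\omega_2$, with $I^2=K^2=-1$ and $IK=-KI$.
\begin{itemize}
\item Differentiating $K=\omega_1^{-1}\omega_2$ gives $\dot K=\omega_1^{-1}\dot\omega_2$.
\item Differentiating $K^2=-1$ gives $\dot KK+K\dot K=0$.
\end{itemize}
For a $2$-form $\beta$ and an almost complex structure $L$, the condition ``$\beta$ is $(1,1)$ for $L$'' means $\beta(LU,LV)=\beta(U,V)$. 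We translate $\omega_1^{-1}\dot\omega_2$ anticommuting with $K$ into a statement about the symmetry of $\dot\omega_2$, using $\omega_1=gI$ and $I^*gI=g$. The expected outcome is that $\omega_1^{-1}\dot\omega_2$ anticommuting with $K$ is equivalent to $\dot\omega_2(KU,KV)=\dot\omega_2(U,V)$, possibly with extra terms involving $J$ that the $J$-$(1,1)$ property already handles.

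The same computation with $I=\omega_3^{-1}\omega_2$ and $I^2=-1$ gives the $I$-$(1,1)$ property.

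The main obstacle is this index bookkeeping. It requires writing $\omega_1^{-1}\beta$ in terms of $g^{-1}\beta$ and checking which (anti)commutation with $K$ corresponds to invariance of $\beta$ under $K$. The trap is that the sign can flip to give $(2,0)+(0,2)$ instead of $(1,1)$. If the direct route yields only a partial condition, I would combine it with the $J$-$(1,1)$ property: a form that is $(1,1)$ for $J$ anticommutes with $J$ after raising an index by $g$, and together with the relation from $K^2=-1$ this should pin down commutation with $I$ and $K$.

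\emph{From $I$, $J$, $K$ to the whole sphere.} Let $B=g^{-1}\dot\omega_2$. Being $(1,1)$ for $I$, $J$ and $K$ means $B$ commutes with each of them, with $g$ fixed to first order, or corrected by the $\dot g$ term, which is handled the same way. Commuting with $I$, $J$, $K$ implies commuting with $aI+bJ+cK$ for all $a^2+b^2+c^2=1$. Hence $\dot\omega_2$ is $(1,1)$ for every complex structure in the family.
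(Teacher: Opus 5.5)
Your route is correct but differs from the paper's. The computation you flag as the main obstacle closes with no extra terms and no $\dot g$ contribution. Put $B=g^{-1}\dot\omega_2$ using the unperturbed metric, so $\dot\omega_2(U,V)=g(BU,V)$; only $\dot\omega_1=\dot\omega_3=0$ and the unperturbed $I,J,K$ are involved. Then $\dot K=\pm\omega_1^{-1}\dot\omega_2=\pm IB$ (the sign depends only on conventions), and $\dot KK+K\dot K=0$ reads $IBK=-KIB=-JB$, i.e.\ $BK=KB$ after left multiplication by $I$. Similarly $\dot I=\pm\omega_3^{-1}\dot\omega_2=\pm KB$, and $\dot II+I\dot I=0$ gives $KBI=-IKB=JB$, i.e.\ $BI=IB$. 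Since $I,K$ are $g$-orthogonal, these are exactly the $I$- and $K$-invariance of $\dot\omega_2$. Commutation with $J=KI$, hence with every $aI+bJ+cK$, follows, so your $-dJd\dot f$ step and the fallback are unnecessary.

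The paper instead works on the whole twistor line at once. It writes $\omega_\zeta=(\omega_2+i\omega_3)\zeta^2+2i\omega_1\zeta+(\omega_2-i\omega_3)$ and linearizes the single constraint $\omega_\zeta^{n+1}=0$ to $\omega_\zeta^n\wedge\dot\omega_\zeta=0$. This says that $\dot\omega_\zeta=(\zeta^2+1)\dot\omega_2$ has no $(0,2)$ part for the structure $\zeta$, and reality then gives type $(1,1)$. At $\zeta=\pm i$, i.e.\ $\pm J$, the factor vanishes, and continuity or $\dot\omega_2=-dJd\dot f$ covers that case.

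The paper's version is uniform in $\zeta$ and is the standard ``no $(0,2)$ component'' criterion for deforming a holomorphic symplectic form. Yours is elementary pointwise linear algebra and avoids the exceptional $\zeta$. It also gives directly the sharper statement that $g^{-1}\dot\omega_2$ commutes with all of $I,J,K$, i.e.\ takes values in $\lie{sp}(n)$.
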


\begin{proof}
We write $\omega_{\zeta}=(\omega_2+i\omega_3)\zeta^2 +2i\omega_1\zeta+(\omega_2-i\omega_3)$, then regarding $\zeta$ as a parameter on $\PP^1\cong S^2$, this defines a multiple of the symplectic holomorphic 2-form for each complex structure of the family $x_1I+x_2J+x_3K$.  In particular $\zeta=\infty$ is the complex structure $I$ and  $(\omega_2+i\omega_3)$ is the holomorphic 2-form. 
The algebraic relations the symplectic forms satisfy are given by $\omega_{\zeta}^{n+1}=0$ for a manifold of complex dimension $2n$. Then
$(n+1)\omega_{\zeta}^n\dot\omega_\zeta=0$ for a  first order deformation $\dot\omega_{\zeta}$, which is equivalent to the vanishing of the $(0,2)$ component of $\dot\omega_{\zeta}$ for each complex structure $\zeta$. 

In our case $\dot\omega_{\zeta}=(\zeta^2+1)\dot \omega_2$ and so $\dot \omega_2$ has zero $(0,2)$ component but is also real so is of type $(1,1)$ with respect to all complex structures.

\end{proof}

The 2-form $\dot\omega_2=-d(Jd\dot f)$
 is clearly of type $(1,1)$ with respect to $J$ but we know from the Proposition  that it is also of type $(1,1)$ with respect to $I$, which we work with now. 
Consider $-2i(Jd\dot f)^{0,1}=(I-i)Jd\dot f=J(-I-i)d\dot f$ and recall from (\ref{dotf}) that $\dot f$ 
is the real part of an $I$-holomorphic function $h$ and so $(I+i)d\dot f=idh$ giving 
$$(Jd\dot f)^{0,1}=\frac{1}{2}Jdh.$$
Now $h$ is a Hamiltonian function of the integrable system and generates a holomorphic vector field $Z$ where $i_Z(\omega_2+i\omega_3)=dh$ and $IZ=iZ$. Then evaluating on a tangent vector $U$ we have
$$Jdh(U)=dh(JU) = (\omega_2+i\omega_3)(Z, JU)=g(JZ,JU)+ig(KZ,JU)$$
and this is $ g(Z,U)-ig(IZ,U)=2g(IZ,U)=2\omega_1(Z,U)$
and so 
\begin{equation}\label{Jdh}
Jdh=2i_Z\omega_1
\end{equation}

\begin{remark}

The last part of the above proof holds for any holomorphic function $h$ of the integrable system and so from (\ref{Jdh}) we have $dJdh$ is of type $(1,1)$ with respect to $J$ but also with respect to $I$ since $d\omega_1=0$ and $Z$ is holomorphic. The $(1,1)$-forms consist of  the $+1$ eigenspace of the action of $I$ on 2-forms so it follows from $IJ=K$ that $dJdh$ is of type $(1,1)$ with respect to all complex structures. This observation provokes the question of whether all  these first order deformations can be integrated to a genuine deformation.
\end{remark}

\begin{remark}
These 2-forms are exact and so each one may be considered as the curvature of  a hyperholomorphic connection on a $C^{\infty}$-trivial line bundle. In the context of mirror symmetry for $\MM_{Dol}$, a Fourier-Mukai type of transform is conjectured to  relate  a hyperholomorphic connection (a BBB-brane)  to a complex Lagrangian submanifold (a BAA-brane) of the mirror. The mirror for a group $G$ is,  by applying the SYZ approach,   the moduli space for the Langlands dual group in the sense that  the dual of the abelian variety which is a generic fibre of the integrable system for  one group is isomorphic to the moduli space of line bundles of some fixed degree on the mirror. From (\ref{Jdh}) we can define the holomorphic structure of a  hyperholomorphic line bundle by the $\bar\partial$-operator $\bar\partial+i_Z\omega_1$, defining a degree zero line bundle on each Jacobian fibre of the integrable system.

The Lagrangian for the {\it trivial} hyperholomorphic line bundle is recognized to be  the so-called Hitchin section, representing the character variety for the split real form in $\MM_B$. Then the non-trivial line bundle defined here corresponds to the application to this section of the time $t=1$ integral of the Hamiltonian vector field $Z$ to a holomorphic symplectic diffeomorphism of $\MM_{Dol}$. Clearly we can apply this to any other known correspondence of this type, for example the upward flow Lagrangians in \cite{HH}. Tensoring the  hyperholomorphic bundle by one of the above line bundles should be equivalent to translating the Lagrangian by the diffeomorphism. 
\end{remark}

\begin{remark}
 Identifying the $(1,1)$-form, a section of $T^*\otimes \bar T^*$, with $T\otimes \bar T^*$ using the holomorphic symplectic form gives the Kodaira-Spencer class of the deformation as an element of $H^1(\MM_{Dol},T)$. Then (\ref{Jdh})
shows that this is defined by taking  $i_Z\omega_1\in \Omega^{0,1}$ to represent a class in $H^1(\MM_{Dol},\OO)$ and applying the sheaf homomorphism  $\OO\rightarrow \OO(T)$ of taking the Hamiltonian vector field.  Note that a hyperholomorphic bundle corresponds to a holomorphic bundle on the twistor space $p: Z\rightarrow \PP^1$ and for a holomorphic line bundle  with trivial Chern class these are parametrized by $H^1(Z,\OO)$. Presumably restriction  to the fibre  of $p$ over the complex structure $I$ relates this to the Kodaira-Spencer class.
\end{remark}

This was something of a diversion but we shall need (\ref{Jdh}) later. We now address the geometry of the family of Higgs bundle moduli spaces over Teichm\"uller space and not just the first order variation.

\section{The symplectic connection} 

Consider the product  as a trivial bundle $\pi: \MM_B\times \mathcal T\rightarrow \mathcal T$ over Teichm\"uller space $\mathcal T$. For notational convenience we call the base $\mathcal T=B$ and refer to base and fibre derivatives. The product is a flat Ehresmann connection and differentiation in the direction of $B$ is the covariant derivative $\nabla_B$, acting on vertical vector fields, sections of the tangent bundle along the fibres $T_F$. A general Ehresmann connection is a horizontal subbundle of the tangent bundle of the total space and the covariant derivative of a vector field $V$ along the fibres by a vector field $Y$ in the base is the Lie bracket $[\tilde Y,V]$ of the horizontal lift $\tilde Y$ of $Y$.
 Since $\omega_1$ is constant on $\MM_B$, the connection also preserves Hamiltonian vector fields and acts on functions $h:\MM_B\times B\rightarrow \R$. Therefore $\nabla_Bh=d_Bh$ can also  be considered as  just the horizontal component of the derivative of $h$.
 
 The circle action acts on the fibres and so, applied to $\nabla_B$, gives a family of flat connections parametrized by the circle. 
Averaging over the circle (connections form an affine space) we get a connection $\nabla_A$ which is invariant and still preserves $\omega_1$ since this is also invariant by the circle action. We shall usually consider the covariant derivative of a function, so this is the adjoint action on a bundle of Lie algebras -- the real-valued functions on $\MM_B$ endowed with the Poisson bracket $\{h_1,h_2\}$ of $\omega_1$. Then 
$$   \nabla_B=\nabla_A+c$$
 where $c$ is a section of $\pi^*T^*_B$ and the average of  $c$ is zero. If we want to act on a vertical vector field or tensor we take the Hamiltonian vector field $X_c$ and the Lie derivative.
 
Let $X=X_f$ be the vertical vector field generating the circle action. 
Then, for a function $h$,  $\nabla_B\{f,h\}=\{\nabla_Bf,h\}+\{f,\nabla_Bh\}$ since $\nabla_B$ preserves the Poisson bracket. We rewrite this as 
$$\nabla_B{\mathcal L}_Xh-{\mathcal L}_X\nabla_Bh=\{\nabla_Bf,h\}.$$ 
But $\nabla_B=\nabla_A+c$ and by definition $\nabla_A$ is invariant, so   ${\mathcal L}_Xc=\nabla_Bf$. 

Now $\nabla_Bf$ is a section of $\pi^*T^*_B$ and given a tangent vector $Y$ at a point of $B$, $i_Y\nabla_Bf$ is just the variation of $f$ in the direction of $Y$ while fixing the point in $\MM_B$. But this is the $\dot f$ of the previous sections, and so 
$\nabla_Bf$ is the real part $\beta$ of 
\begin{equation}\label{phidef}
\varphi=\beta+i\gamma=-\frac{1}{2}\int_C \tr \Phi^2\mu
\end{equation}
It follows that ${\mathcal L}_Xc=\nabla_Bf=\beta$.

The circle action is $\Phi\mapsto e^{i\theta}\Phi$, so it  acts on $\varphi$ as  $e^{2i\theta}$. Then 
${\mathcal L}_X(\beta+i\gamma)=2i(\beta+i\gamma)$
giving  $\beta={\mathcal L}_X\gamma/2={\mathcal L}_Xc.$
Since  $c$ averages to zero by definition and $\varphi$ averages to zero since it transforms by $e^{2i\theta}$ we have  $c=\gamma/2$. We now have a formula for the symplectic connection:
\begin{equation}\label{conn}
\nabla_A=\nabla_B-\frac{1}{2}\gamma.
\end{equation}

The flat product connection is 
$$\nabla_B=\nabla_A+\frac{1}{2}\gamma=\nabla_A-\frac{i}{4}(\varphi-\bar\varphi)$$
so the famiy of flat connections parametrized by the circle is  
$$\nabla_\theta=\nabla_A-\frac{i}{4}(e^{2i\theta}\varphi-e^{-2i\theta}\bar\varphi).$$
and equating the Fourier components of the curvature  to zero gives
\begin{equation}\label{expand}
d_A\varphi=0, \qquad \{\varphi,\varphi\}=0,\qquad F_A+\frac{1}{8}\{\varphi,\bar\varphi\}=0
\end{equation} 
using $\omega_1$-Poisson brackets.

\begin{remark}
 The above relations look formally like the equations for a Higgs bundle, where the Lie algebra consists of the complexification of the real Lie algebra of Hamiltonian functions. 
 The brackets are however Poisson brackets and involve differentiation -- in terms of the Hamiltonian vector fields they generate this is the Fr\"olicher-Nijenhuis bracket on differential forms with values in vector fields. One might wonder therefore whether 
 derivatives in directions on the base are involved. Sections of $T\otimes T^*$ are linear combinations of terms  $\phi\otimes X,\psi\otimes Y$ and the formula for the bracket $[\phi\otimes X,\psi\otimes Y]$ is 
 $$\phi\wedge \psi\otimes [X,Y]+\phi\wedge {\mathcal L}_X\psi\otimes Y-{\mathcal L}_Y\phi\wedge \psi\otimes X
+d\phi\wedge i_X\psi\otimes Y+i_Y\phi\wedge d\psi\otimes X.$$
 In our case we have  vertical vector fields $X,Y$, so that $i_X\psi=0 =i_Y\phi$ and  the formula 
involves no  derivatives on the base.
\end{remark}

\begin{remark}
Since $\varphi$ evaluated on $\mu\in H^1(C,K^*)$ is essentially one of the Poisson commuting functions of the integrable system on the moduli space of Higgs bundles,  the relation $\{\varphi,\varphi\}=0$ suggests a relation with this, but we are taking the Poisson bracket with respect to the K\"ahler form $\omega_1$ and not the holomorphic form $\omega_2+i\omega_3$. In this case,   $\varphi$ is holomorphic and $\omega_1$ is of type $(1,1)$, so  if $g,h$ are holomorphic functions and   $i_Z\omega_1=dg$ then 
$Z=\sum _ia_i\partial/\partial \bar z_i$ annihilates $h$ and the Poisson bracket vanishes.
\end{remark}

\begin{remark}The analogy with Higgs bundles requires $\nabla_A$ to be the unitary connection and hence the real functions to be compared to the Lie algebra of the unitary group. But our  flat connections parametrized by the circle are then unitary connections, which is not the case for Higgs bundles. Put another way, the real functions are fixed by complex conjugation $h\mapsto \bar h$ whereas the Lie algebra of the unitary group is fixed by $A\mapsto -A^*$, so our expression $\{\varphi,\bar\varphi\}$ is analogous to $-[\Phi,\Phi^*]$. For finite-dimensional Lie groups  the equation $F_A-[\Phi,\Phi^*]=0$ is locally equivalent to a harmonic map from a surface  to the compact group $G$, as in \cite{Hit6}. 
\end{remark}

We now spell out the Higgs bundle-type relations (\ref{expand}):
\begin{itemize}
\item
$0=\{\varphi,\varphi\}=\{\beta+i\gamma,\beta+i\gamma\}=\{\beta,\beta\}-\{\gamma,\gamma\}+2i\{\beta,\gamma\}$
since the bracket on 1-forms is symmetric. Then $\{\beta,\beta\}=\{\gamma,\gamma\}$ and $\{\beta,\gamma\}=0$.
\item
$F_A=-\{\varphi,\bar\varphi\}/8=-\{\gamma,\gamma\}/4$ from the previous expansion.
\item
 $d_A\varphi=0$ or   equivalently 
$d_B\varphi =\{\gamma,\varphi\}/2$, so $d_B\beta=\{\gamma,\beta\}/2=0$, though since  $\beta=d_Bf, d_B\beta=0$ we know this already.
The imaginary part of the relation gives 
$d_B\gamma=\{\gamma,\gamma\}/2=-2F_A$.
\end{itemize}

\section{Applications}
\subsection{Holonomy}
By definition, the connection $\nabla_A$ is invariant by the circle action which means that $\nabla_Af=0$ and $\nabla_AX=0$.
Parallel translation along a curve $\gamma:[0,1] \rightarrow B$ with respect to the  connection consists of integrating a vector field on $\MM_B\times [0,1]$ to a Hamiltonian isotopy from the fibre over $t=0$ to the fibre over $t=1$, namely a diffeomorphism of $\MM_B$. Because $\MM_B$ is noncompact, in principle there is only a local solution to the differential equation,  but $f$ is preserved and  proper so given $R>0$ there is a solution on $\vert f\vert \le R$ for all $t\in [0,1]$. By uniqueness this extends to the whole of $\MM_B\times [0,1]$. 

A  consequence of the fact that parallel translation preserves the circle action   is that fixed points, or fixed points of subgroups of the circle, are Hamiltonian isotopic for any two points in Teichm\"uller space. An example is the moduli space of cyclic Higgs bundles -- fixed points of a finite subgroup of the circle. 
\subsection{The Levi form}
The base $B$ of the fibration, Teichm\"uller space, has a complex structure, its tangent space at $C$ isomorphic to $H^1(C,K^*)$. Given a point  $x\in \MM_B$, $f(x,t)$ is a real-valued function on the complex manifold $B$ and one may consider its Levi form $d_BI_Bd_Bf$. Recall the definition
$$\varphi=\beta+i\gamma=-\frac{1}{2}\int_C \tr \Phi^2\mu.$$
This is complex linear in $\mu$ and so a section of $\pi^*\Lambda^{1,0}T^*_B$ and therefore $I_B\varphi=i\varphi$ giving $I_B\beta=-\gamma$. We can therefore write the Levi form as $d_BI_Bd_Bf=d_BI_B\beta=-d_B\gamma$. But in the previous section we saw that $d_B\gamma=-2F_A$ so $2F_A$ is the Levi form, and so is of type $(1,1)$. 

However, the equation $F_A+\{\varphi,\bar\varphi\}/8=0$ gives the Levi form as $-\{\varphi,\bar\varphi\}/4$ and $\varphi$ is holomorphic in  the fibre directions, so this is $-\omega^{-1}_1(\partial_F\varphi,\partial_F\varphi)/4$ and  the  form is negative semi-definite. Since $\Vert \Phi\Vert^2=-2f$  equivalently the energy of the harmonic section of the flat $GL(n,\C)/U(n)$ bundle is plurisubharmonic. 

This is a variant of the result of D.Toledo in \cite{Tol}, but a recent paper by O.To\v sic \cite{Tos} goes further to analyse the null space of the Levi form. From our point of view the null space consists of the values of $\mu$ for which $\partial_F\varphi=0$, a critical point of a quadratic  function of the integrable system: this is Theorem 1.5 of \cite{Tos}. An alternative description (Theorem 1.1) describes the null space as the intersection of the horizontal spaces of the flat connection $\nabla_B$ and its transform by $i\in S^1$. In our formulation the two connections are $\nabla_A\pm \gamma/2$ so the 
kernel is defined by $X_{\gamma}=0$ or $d_F\gamma=0$, the critical locus again.

  \section{$\MM_B\times \mathcal T$ as a complex manifold}  \label{complex}
Consider the horizontal distribution $H$ on $\MM_B\times B$ defined by the connection $\nabla_A$. Recall that $\gamma$ is a section of $\pi^*T^*_B$ defining the action on functions; the action on sections of $T_F$ is given by the Lie derivative by the associated Hamiltonian vector field $X_{\gamma}$.  Then $X_{\gamma}$ is a section of $T_F\otimes \pi^*T^*_B$ and the horizontal lift of a tangent vector $Y$ on $B$ is 
$$\tilde Y =Y-\frac{1}{2}X_{\gamma(Y)}.$$
We introduce an almost complex structure on $T= T_F\oplus H$ by using $I$ on the tangent space along the fibres and the complex structure $I_B$ on Teichm\"uller space   using the isomorphism $d\pi: H\rightarrow \pi^*T_B$. 
\begin{proposition} The almost complex structure is integrable.
\end{proposition}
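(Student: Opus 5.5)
We apply the Newlander--Nirenberg theorem. It suffices to show that the $(1,0)$ tangent bundle $T^{1,0}=T_F^{1,0}\oplus \tilde H^{1,0}$ is closed under Lie bracket. Here $T_F^{1,0}$ is the $+i$ eigenspace of $I$ on vertical vectors and $\tilde H^{1,0}$ consists of horizontal lifts $\tilde Y$ of $(1,0)$ vectors $Y$ on $B$. Choose local holomorphic coordinates $t_j$ on $B$ and set $Y_j=\partial/\partial t_j$. Then $T^{1,0}$ is spanned locally by the $\tilde Y_j=Y_j-\frac12 X_{\gamma(Y_j)}$ together with vertical $(1,0)$ fields $V$. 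We check three types of bracket.

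First, vertical with vertical. On each fibre $I$ is the integrable complex structure of $\MM_{Dol}$, so $[V,W]\in T_F^{1,0}$.

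Second, horizontal with vertical. We have $[\tilde Y,V]=\nabla_{A,Y}V$. We must show that the covariant derivative of $\nabla_A$ in a $(1,0)$ base direction preserves $T_F^{1,0}$, i.e.\ that $(\mathcal L_{\tilde Y} I)$ maps $T_F^{1,0}$ into itself. Equivalently, $\mathcal L_{\tilde Y}I$ composed with projection to $T^{0,1}_F$ must vanish on $T^{1,0}_F$; that is, $(\mathcal L_{\tilde Y}I)^{(1,0)\to(0,1)}=0$. Write $\mathcal L_{\tilde Y}=\nabla_{B,Y}-\frac12\mathcal L_{X_{\gamma(Y)}}$. The term $\nabla_{B,Y}I$ is the variation $\dot I$ as the complex structure on $C$ moves, with $\omega_1,\omega_3$ fixed. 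Since $I=\omega_3^{-1}\omega_2$ up to sign conventions, this gives $\dot I=\omega_3^{-1}\dot\omega_2$ with $\dot\omega_2=-dJd\dot f$, $\dot f=\beta(Y)$. Section \ref{HKvary} computed that $(Jd\dot f)^{0,1}=\frac12 Jdh$ with $Jdh=2i_Z\omega_1$ by (\ref{Jdh}), where $h$ is the holomorphic function with real part $\dot f$. For a $(1,0)$ direction $Y$ we have $h=\varphi(Y)$, and $\gamma(Y)=\mathrm{Im}\,\varphi(Y)$ combines with $\beta(Y)$ since $I_B\beta=-\gamma$. The second term $\mathcal L_{X_{\gamma(Y)}}I$ is the deformation of $I$ by a Hamiltonian vector field. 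Its $(0,1)$-valued part is governed by $\bar\partial$ of the $(1,0)$ part of $X_{\gamma(Y)}$, which is expressible via $i_Z\omega_1$ because $\varphi(Y)$ is $I$-holomorphic.

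The key computation is then to show that the Kodaira--Spencer representative of $\dot I$, namely $i_Z\omega_1$ pushed through $\OO\to\OO(T)$ as in the remark following (\ref{Jdh}), is exactly cancelled by $\frac12\bar\partial$ of the $(1,0)$ part of $X_{\gamma(Y)}$. For $Y$ of type $(1,0)$ the factors of $i$ from $\varphi=\beta+i\gamma$ must align; for $(0,1)$ directions they would not, which is consistent with only $T^{1,0}$ needing to be closed. This is the step I expect to be the main obstacle: tracking constants and signs so that the $\tfrac12$ in $\tilde Y$ matches the $\tfrac12$ in $(Jd\dot f)^{0,1}=\tfrac12Jdh$.

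Third, horizontal with horizontal. We have $[\tilde Y_j,\tilde Y_k]=\widetilde{[Y_j,Y_k]}-X_{F_A(Y_j,Y_k)}=-X_{F_A(Y_j,Y_k)}$. By the Levi form computation, $2F_A=-d_B\gamma$ is of type $(1,1)$ on $B$. Hence $F_A(Y_j,Y_k)=0$ for two $(1,0)$ vectors, and this bracket vanishes. Equivalently, this uses $F_A=-\{\varphi,\bar\varphi\}/8$ together with $\varphi$ being of type $(1,0)$ on $B$, so that $\{\varphi,\bar\varphi\}$ is $(1,1)$.

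Combining the three cases, $T^{1,0}$ is involutive, and the almost complex structure is integrable.
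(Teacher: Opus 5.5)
Your three cases are the paper's own decomposition: $\bar\partial_F^2=0$, $\bar\partial_A^2=0$, and the mixed term. Cases 1 and 3 are fine; case 3 is exactly the statement that $F_A$ has type $(1,1)$. The gap is case 2, which is the entire content of the proposition. You do not carry it out, and the mechanism you sketch is not the right one.

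For $Y$ of type $(1,0)$ one has $\bar\varphi(Y)=0$, so $\gamma(Y)=-\tfrac{i}{2}\varphi(Y)$ is fibrewise $I$-holomorphic. Since $\omega_1$ has type $(1,1)$ and $d_F\gamma(Y)$ has type $(1,0)$, the field $X_{\gamma(Y)}$ has pure type $(0,1)$. So the ``$(1,0)$ part of $X_{\gamma(Y)}$'' whose $\bar\partial$ you invoke is zero.

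Moreover, $\sigma\partial(i_Z\omega_1)$, obtained by pushing $i_Z\omega_1$ through $\OO\to\OO(T)$, is the $T^{0,1}_F\to T^{1,0}_F$ component of the variation. Your own criterion asks for the other component, $T^{1,0}_F\to T^{0,1}_F$, to vanish. This piece must not cancel: it represents the genuine Kodaira--Spencer class of the family. If it vanished too, then $\mathcal L_{\tilde Y}I\equiv 0$, and $\nabla_A$-parallel transport would be a biholomorphism between Higgs moduli spaces of different curves.

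What the correction term actually does is visible in a real direction $Y$. There $\nabla_{B,Y}\omega^c=\dot\omega_2=-d(i_Z\omega_1+i_{\bar Z}\omega_1)$, while $-\tfrac12\mathcal L_{X_{\gamma(Y)}}\omega^c=d(i_{\bar Z}\omega_1)$. So only the antilinear, $\bar\partial$-exact piece is removed. The remainder $-\partial(i_Z\omega_1)$ is complex-linear in $Y$, and complex-linearity, not vanishing, is what integrability requires.

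The missing idea is to follow the holomorphic symplectic form $\omega^c=\omega_2+i\omega_3$ rather than $I$. Since $T^{1,0}_F=\ker\bar\omega^c$, the identity $i_{[\tilde Y,V]}\bar\omega^c=\mathcal L_{\tilde Y}(i_V\bar\omega^c)-i_V\mathcal L_{\tilde Y}\bar\omega^c$ reduces case 2 to showing $\mathcal L_{\tilde Y}\bar\omega^c=0$ for $Y$ of type $(1,0)$. This is the complex conjugate of the paper's Lemma $\bar\partial_A\omega^c=0$, and that lemma is a direct cancellation. Set $h=\varphi(Y)$.
\begin{itemize}
\item Since $\omega_3$ is fixed and $\beta(\bar Y)=\tfrac12\bar h$, one has $\nabla_{B,\bar Y}\omega^c=\dot\omega_2(\bar Y)=-\tfrac12 d_FJd_F\bar h$.
\item Since $\gamma(\bar Y)=\tfrac{i}{2}\bar h$ and $h$ is holomorphic, $i_{X_{\gamma(\bar Y)}}\omega^c=-Jd_F\bar h$.
\item By Cartan's formula, $\mathcal L_{X_{\gamma(\bar Y)}}\omega^c=-d_FJd_F\bar h$.
\end{itemize}
The coefficient $-\tfrac12$ in $\tilde Y=Y-\tfrac12X_{\gamma(Y)}$ then cancels the first term exactly. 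The paper phrases the conclusion as: $\bar\partial_A$ preserves fibrewise holomorphic functions $u$, which are characterized by $(\omega^c)^n\wedge d_Fu=0$.
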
 
\begin{proof} The $\bar\partial$-operator for this structure acting on functions is $\bar\partial_{\MM\times B}=\bar\partial_F + \bar\partial_A$ where the second term is the $(0,1)$ component of $\nabla_A$ and $\bar\partial_F$ is the operator along the fibres of $\pi:\MM_B\times B\rightarrow B$ using the complex structure $I$. Integrability is the equation $\bar\partial_{\MM_B\times B}^2=0\in \Omega^{0,2}(\MM_B\times B)$ which has three parts according to the decomposition of $T^*_{\MM_B\times B}$ into vertical and horizontal components.

Since $I$ is integrable in each fibre we have $\bar\partial_F^2=0$ and since the curvature $F_A$ has type $(1,1)$, $\bar\partial_A^2=0$. It remains to prove $\bar\partial_F\bar\partial_A+\bar\partial_A\bar\partial_F=0$. This is equivalent to saying that  $\bar\partial_A$ preserves local holomorphic functions on the fibres. Each fibre has a holomorphic symplectic form $\omega^c=\omega_2+i\omega_3$ and a function $h$ is holomorphic if $(\omega^c)^ndh=0$ where the complex dimension of $\MM_B$ is $2n$. The result will therefore hold from the following lemma:
\begin{lemma}
The section $\omega^c$ of $\Lambda^{2,0}T^*_F$  satisfies $\bar\partial_A\omega^c=0$.
\end{lemma}
\begin{proof}
First consider the variation of $\dot\omega^c$ of $\omega^c$ corresponding to a tangent vector $Y$ on $B$.
We have $\omega^c=\omega_2+i\omega_3$ and  $\omega_3$ is constant in our deformation so this is $\dot \omega_2$. In Section \ref{HKvary} we showed that
$$\dot\omega_2=-d_F(Jd_F\dot f)=-\frac{1}{2}d_FJ(d_Fh +d_F\bar h)$$
where $h=i_Y\varphi$, so \begin{equation}\label{first}
\bar\partial_B\omega_2=-d_FJd_F\bar \varphi/2
\end{equation}
The symplectic connection $\nabla_A =\nabla_B-\gamma/2$ acts on  sections of $\Lambda^*T^*_F$  by applying  the Lie derivative of  the Hamiltonian vector field $X_{\gamma}/2$. We have 
${\mathcal L}_{X_{\gamma}}(\omega_2+i\omega_3)=d_F(i_{X_{\gamma}}(\omega_2+i\omega_3))$ and, as in Section \ref{HKvary}, we consider a vertical tangent vector $U$ so that 
$$i_{X_{\gamma}}(\omega_2+i\omega_3)(U)=g(JX_{\gamma},U)+ig(KX_{\gamma},U)=\omega_1(X,JIU)+i\omega_1(X,KIU)$$
$$= d_F\gamma(J(I+i)U)=J(-I+i)d_F\gamma(U)=-Jd_F\bar \varphi(U)$$
and so, since $\nabla_A=\nabla_B- \gamma/2$, together with (\ref{first}) this proves the lemma.

\end{proof}

\end{proof}

A number of features follow from this construction, which is essentially an analytic version of Simpson's relative moduli space \cite{Simp}:
\begin{itemize}
\item
From $d_A\varphi=0$, the section $\varphi$ of $\pi^*T_B^*\subset T^*_{\MM}$ is a holomorphic 1-form on $\MM_{Dol}$.
\item
The inverse $\sigma$ of $\omega^c$ is a section of $\Lambda^2T_F\subset \Lambda^2T_{\MM}$ and is a holomorphic Poisson tensor, the symplectic leaves being the fibres of the holomorphic projection $\pi:{\MM}_B\times \mathcal T\rightarrow \mathcal T$.
\item
The $\C^*$-action along the fibres is holomorphic. 
\item
The product structure $\MM_B\times \mathcal T$ defines the isomonodromic foliation of the complex manifold. It is not holomorphic in general but it is of some interest (see \cite{Bis},\cite{Zuo} for example) to determine leaves which are. From the point of view here this is where the horizontal spaces for connections $\nabla_B$ and $\nabla_A$ coincide. 
\end{itemize}

\section{Examples}
\subsection{Genus $2$}
While we cannot produce explicit formulas for the function $f$ or the symplectic connection $\nabla_A$ we can in some cases demonstrate the holomorphic 1-form $\varphi$ on the universal Higgs bundle moduli space.

When the curve  $C$ has genus $g=2$ then the moduli space ${\mathcal N}$ of stable bundles of rank $2$ and fixed odd determinant is  isomorphic to the  intersection of two quadrics in $\PP^5$. The cotangent bundle $T^*{\mathcal N}$ is an open dense subset of the moduli space $\MM_{Dol}$ and, thanks to \cite{Betal} there is an explicit description of the integrable system which allows us to give an expression for $\varphi$.

The curve is hyperelliptic with equation 
$$y^2=(z-\mu_1)\dots (z-\mu_6)$$
and the Teichm\"uller space is a covering of  the configuration space of $6$ distinct points $\mu_i\in \PP^1$ modulo projective transformations. 
The moduli space ${\mathcal N}$ is the intersection of the standard quadric $Q: q(x)=\sum_ix_i^2=0$ with the variable quadric $Q_\mu: q_{\mu}(x)=\sum_i\mu_ix_i^2=0$. The integrable system (see also \cite{Hit5}), which is also valid in all dimensions, is defined by the functions: 
$$f_i=4\sum_{j\ne i}\frac{(x_iy_j-x_jy_i)^2}{\mu_j-\mu_i}.$$
There are linear relations to yield the three-dimensional space.

We can understand the formula by considering $x\wedge y\in \Lambda^2\C^6\cong \lie{so}(6)$, where $x$ is a null vector and $y$ with $(x,y)=0$ is non-null, as a coadjoint orbit of $SO(6)$, an open set in the cotangent bundle of the quadric $Q$. Restricting the cotangent bundle to  $Q\cap Q_\mu$,  the canonical symplectic form has a degeneracy foliation and the quotient gives the cotangent bundle of $Q\cap Q_{\mu}=\mathcal N$. The functions $x_iy_j-x_jy_i$ are linear coordinates on 
$\Lambda^2\C^6$ and one may check that the quadratic expression above is constant on the leaves of the foliation. 

The family of Higgs bundle moduli spaces can then be represented by this family of subvarieties of $T^*Q$ parametrized by $\{\mu_1,\mu_2,\mu_3,0,1,\infty\}$ and then 
\begin{equation}\label{phiformula}
\varphi=4\sum \sum_{j\ne i}\frac{(x_iy_j-x_jy_i)^2}{\mu_j-\mu_i}d\mu_i.
\end{equation}

\begin{remark}
The kernel of the Levi form   corresponds to the critical locus of the Hitchin fibration, and critical points imply that  the Higgs field vanishes at a point  \cite{Hit4}. From \cite{Hit5} this gives the equations
$$\sum_{i=1}^6\frac{x_i^2}{z-\mu_i}=0,\quad \sum_{i=1}^6\frac{y^2_i}{z-\mu_i}=0, \quad \sum_{i=1}^6\frac{x_iy_i}{z-\mu_i}=0$$
for some $z\in \PP^1$. Then for a generic point in the character variety, there is a complex structure for which the Levi form is degenerate.
\end{remark}
\subsection{Parabolic Higgs bundles on $\PP^1$}
A closely related situation concerns parabolic Higgs bundles on $\PP^1$ with marked points $\mu_1,\dots,\mu_n$, these points, up to projective equivalence, playing the role of the  base of the universal moduli space. Then a generic Higgs bundle is trivial with Higgs field 
$$\Phi=\sum_{i=1}^n\frac{A_i}{z-\mu_i}$$
where the $A_i$ are nilpotent matrices. We have 
$$\tr\Phi^2=\sum_{i,j}\frac{\tr (A_iA_j)}{(z-\mu_i)(z-\mu_j)}$$
and 
$$\varphi =\sum_{i\ne j}\frac{\tr (A_iA_j)}{(\mu_i-\mu_j)}d\mu_i.$$
\section{Further aspects}
\subsection{Real forms}
The moduli space of flat $G^r$-connections for a real form  $G^r\subset G$ is well-known to have a Higgs bundle interpretation: if $H\subset G^r$ is the maximal compact subgroup and $\lie{g}=\lie{h}\oplus \lie{m}$, then the principal $G$-bundle reduces to the complexification of $H$ and the Higgs field is a holomorphic 1-form with values in  the  bundle associated to the action on $\lie{m}$. In particular the circle action preserves this subspace of $\MM_{Dol}$ and the averaged connection induces one on this moduli space. In this case we only have the symplectic form $\omega_1$.

The obvious case is when $G^r$ is the maximal compact subgroup of $G$, so $U(n)\subset GL(n,\C)$ for  the case we have considered. The Higgs field vanishes here and so the two connections $\nabla_A,\nabla_B$ coincide. From the Narasimhan-Seshadri theorem, given a complex structure on $C$,  the moduli space of unitary connections is identified with the moduli space of stable bundles $\mathcal{N}$, and so the holomorphic structure on $\MM_B\times \mathcal {T}$ restricts to a corresponding universal space for stable bundles. 

\begin{remark}
Consider the formula for the representative of the Kodaira-Spencer class on $\MM_{Dol}$ in the direction $Y$: $\sigma\partial(i_Z\omega_1)\in \Omega^{0,1}(T)$ where $\sigma$ is the Poisson tensor given by the inverse of $\omega_2+i\omega_3$. 
Here $i_Z(\omega_2+i\omega_3)=dh$ where $h = \varphi(Y)$. 

Now $h$ is a quadratic function of $\Phi$ and so vanishes to second order on ${\mathcal N}$. There is therefore    a well-defined second derivative $\partial^2h$, a section of the symmetric power $S^2N^*$ of the conormal bundle of $\mathcal{N}\subset \MM_{Dol}$.
The symplectic form $\omega_2+i\omega_3$ defines an isomorphism of the conormal bundle with the tangent bundle of ${\mathcal N}$ since ${\mathcal N}$ is Lagrangian, so this second derivative is identified with a section $S$ of $S^2T_{\mathcal{N}}$. The Kodaira-Spencer class in $H^1({\mathcal N},T)$ induced by the one on $\MM_{Dol}$ is then  the contraction $\sum S^{ij}\omega_{j\bar k}$ of $S$ with $\omega_1$, as in \cite{Hit1}.
\end{remark} 
At the opposite extreme we may consider a component of $\Hom(\pi_1(C),SL(2,\R))$ realizing the uniformizing representations. Here the holomorphic vector bundle is $K^{1/2}\oplus K^{-1/2}$ and the Higgs field $\Phi(u,v)=(qv,u)$ for $q\in H^0(C,K^2)$. Then 
$$\varphi=-\frac{1}{2}\int_C \tr \Phi^2\mu=-\int_Cq\mu$$
and is an isomorphism from the moduli space to the cotangent space of $B$. Thus the universal family is isomorphic to  the cotangent bundle of Teichm\"uller space, with $\varphi$ now identified with the canonical 1-form. 

The other components for $SL(2,\R)$ \cite{Hit0} are of the form $E=L\oplus L^*$ and $\Phi(u,v)=(av,bu)$ for holomorphic sections $a,b$ of $L^2K,L^{-2}K$. Then $\varphi$ maps the universal family surjectively to the cotangent bundle of $\mathcal T$ but $b$ may vanish (at the positive-dimensional fixed point set of the circle) so this locus collapses to the zero section. The inverse image of a generic point involves the different partitions of the quadratic differential $ab$.

\subsection{The prequantum line bundle}
The symplectic form $\omega_1$ on ${\MM}_B$ defines  the curvature of a $U(1)$-connection $\nabla$ on a complex line bundle. In  the context of  geometric quantization  it is called the  prequantum line bundle $L$.

On $\MM_B\times B$  the form $\omega_1$ is  a section of $\Lambda^2T^*_F$  but in the direct sum decomposition  $T^*=H^*\oplus T^*_F$ of the connection $\nabla_A$ it can be promoted to a form $\tilde \omega_1$ on the product space. In the product decomposition $\MM_B\times B$  it of course extends trivially. 

The subbundle $T^*_F\subset T^*$ is characterized by the property that the interior product with a horizontal tangent vector is zero, and the same for $\Lambda^2T^*_F\subset \Lambda^2T^*$. Horizontal tangent vectors are of the form $Y-X_{\gamma(Y)}/2$ so 
 $$\tilde \omega_1=\omega_1-d_F\gamma/2.$$ From the definition of the complex structure in  Section \ref{complex}  in terms of vertical and horizontal subspaces this is of type $(1,1)$. 

Consider now
$$\omega_1-\frac{1}{2}d\gamma = \omega_1-\frac{1}{2}d_F\gamma-\frac{1}{2}d_B\gamma=\tilde\omega_1-\frac{1}{2}d_B\gamma.$$
Since $d_B\gamma=-2F_A$ and $F_A$ is of type $(1,1)$ on $B$ this is a closed 2-form of type $(1,1)$ on $\MM_B\times B$ and hence the curvature of a connection on a holomorphic line bundle. Moreover on each fibre of $\pi:\MM_B\times B\rightarrow B$ it is the prequantum line bundle. 

The connection $\nabla$ on $L$ over $\MM_B$, and not just its curvature $\omega_1$,  may be regarded as fixed,  then the covariant derivative  on sections of $L$ over $\MM_B\times B$ is  
$\nabla+\nabla_B-i\gamma/2 $
where $\gamma$ is acting by scalar multiplication. A local holomorphic section $s$ of $L$ then satisfies the equations:
\begin{equation}\label{holsections}
\nabla^{0,1}s=0,\qquad \nabla_B^{0,1}s-\frac{1}{2}(\nabla_{X_{\bar\varphi}}+i\bar\varphi)s=0.
\end{equation}

 In geometric quantization a  function $h$  acts on $C^{\infty}$ sections of the prequantum line bundle $L$ by 
$$
h\!\cdot\! s= \nabla_{X_h}s+ih s
$$
giving a representation of the Lie algebra of functions with respect to the Poisson bracket (here the real curvature form $\omega_1$ acts on the complex section $s$ by $-i\omega_1$).
We see from (\ref{holsections}) that the $\bar\partial$-operator of the connection $\nabla_A$ acts on sections of $L$ extended to $\MM_B\times B$  via the natural representation of the Lie algebra of functions. 
Since $\nabla_A$ is invariant by the circle action the  holomorphic structure preserves the finite-dimensional subspaces corresponding to weights of the action, the dimensions of which, the ``equivariant Verlinde formula" were calculated in \cite{And}.

\end{document}